\newtheorem{prethm}{{\bf Theorem}}[section]
\newenvironment{thm}{\begin{prethm}{\hspace{-0.5
em}{\bf.}}}{\end{prethm}}
\newtheorem{prepro}{{\bf Theorem}}
\newtheorem{precor}[prethm]{{\bf Corollary}}
\newtheorem{preconj}[prethm]{{\bf Conjecture}}
\newenvironment{conj}{\begin{preconj}{\hspace{-0.5
em}{\bf.}}}{\end{preconj}}
\newtheorem{preremark}[prethm]{{\bf Remark}}
\newtheorem{prelem}[prethm]{{\bf Lemma}}
\newtheorem{preque}[prethm]{{\bf Problem}}
\newtheorem{prealphthm}{{\bf Problem}}
\newenvironment{alphprob}
{
\begin{prealphthm}{\hspace{-0.5 em}
{\bf\ }}}{
\end{prealphthm}
}
\newtheorem{preobserv}[prethm]{{\bf Observation}}
\newtheorem{predef}[prethm]{{\bf Definition}}
\newtheorem{preproposition}[prethm]{{\bf Proposition}}
\newtheorem{preproof}{{\bf Proof.}}
\newtheorem{preprooff}{{\bf Proof}}
\newenvironment{proof}[1]{\begin{preproof}{\rm
#1}\hfill{$\Box$}}{\end{preproof}}
\newtheorem{preproofF}{{\bf Proof of}}
\title{\bf\Large 
The existence of uniform hypergraphs for which interpolation property of complete coloring fails
}
\author{Nastaran Haghparast\thanks{Department of Mathematics and Computer Sciences, Amirkabir University of Technology, Tehran, Iran. E-mail: {\tt nhaghparast@aut.ac.ir}}, Morteza Hasanvand\thanks{
Department of Mathematical Sciences, Sharif University of Technology, Tehran, Iran.
E-mail: {\tt hasanvand@alum.sharif.edu}}, and 
Yumiko Ohno\thanks{Research Initiatives and Promotion Organization, Yokohama National University,
79-7, Tokiwadai, Hodogaya-ku, Yokohama 240-8501, Japan. E-mail: {\tt ohno-yumiko-hp@ynu.ac.jp}}
}
\date{}
\begin{document}
\maketitle
\begin{abstract}{
In 1967 Harary, Hedetniemi, and Prins showed that every graph $G$ admits a complete $t$-coloring for every $t$ with $\chi(G) \le t \le \psi(G)$, where $\chi(G)$ denotes the chromatic number of $G$ and $\psi(G)$ denotes the achromatic number of $G$ which is the maximum number $r$ for which $G$ admits a complete $r$-coloring. Recently, Edwards and Rz\c a\.{z}ewski (2020) showed that this result fails for hypergraphs by proving that for every integer $k$ with $k\ge 9$, there exists a $k$-uniform hypergraph $H$ with a complete $\chi(H)$-coloring and a complete $\psi(H)$-coloring, but no complete $t$-coloring for some $t$ with $\chi(H)< t<\psi(H)$. They also asked whether there would exist such an example for $3$-uniform hypergraphs and posed another problem to strengthen their result. In this paper, we generalize their result to all cases $k$ with $k\ge 3$ and settle their problems by giving several kinds of $3$-uniform hypergraphs. In particular, we disprove a recent conjecture due to Matsumoto and the third author (2020) who suggested a special family of $3$-uniform hypergraph to satisfy the desired interpolation property.
\\
\\
\noindent {\small {\it Keywords}:  Hypergraph; complete coloring; triangulation; face hypergraph. }} {\small
}
\end{abstract}
%
%==============================================================================
%
%
%
%
%									  1
%								         111
%									11
%									11
%									11
%									11
%								     1111111
%
%
%
%
%==============================================================================
%
\section{Introduction}
In this paper, all hypergraphs are considered simple. Let $H$ be a hypergraph. 
The vertex set and the hyperedge set of $H$ are denoted by $V(H)$ and $E(H)$, respectively. 
A vertex subset of $V(H)$  is said to be {\it independent}, if there is no hyperedge of $H$ including two different vertices of it.
The {\it incidence graph} of $H$ refers to a bipartite graph $G$ with $V(G)=V(H)\cup E(H)$ in which a vertex $v\in V(H)$ and a hyperedge $e\in E(H)$ are adjacent in $G$ if  and only if $v\in e$. 
A hypergraph is said to be {\it $k$-uniform}, if the size of all of  hyperedges are the same number $k$.
We say that a vertex set $S$ {\it covers} a hyperedge $e$,  if $S$ includes at least one vertex of $e$.
A {\it face hypergraph} refers a hypergraph obtained from a embedded  graph $G$ whose vertices are the same vertices of $G$
and there is a one-to-one correspondence between the faces of $G$ and hyperedges of $H$ such that each hyperedge of $H$ consists all vertices of  its corresponding face.
This concept was introduced by K\"{u}ndgen and Ramamurthi~\cite{Kundgen-Ramamurthi}.
The minimum number of  colors needed to color the vertices of $H$ such that any two  vertices lying in the same hyperedge have different colors (proper property) is denoted by $\chi(H)$.
A {\it complete $t$-coloring} of a $k$-uniform hypergraph $H$ is a coloring of whose vertices, using $t$ colors, 
such that any two vertices  lying in the same hyperedge  have different colors, and also every arbitrary set of $k$
 different colors appears in at least one hyperedge.
Note that an arbitrary uniform hypergraph may have not a complete coloring, see~\cite{DEBSKI201760}.
If $H$ has a complete $t$-coloring,  we denote by $\psi(H)$ the maximum number of such integers $t$; 
otherwise, we define $\psi(H)=0$.   
  The numbers $\chi(H)$ and $\psi(H)$ are called the {\it chromatic number}  and the {\it achromatic number} of $H$, respectively. 
It was proved in~\cite{EDWARDS2020111673, Matsumoto-Ohno} 
that a given uniform hypergraph $H$
  may  have not a complete $\chi(H)$-coloring even if it admits a complete coloring.
We say that a hypergraph $H$ satisfies {\it interpolation property}, if it admits a complete $t$-coloring for every integer $t$ with $\chi(H) \le s < t \le \psi(H)$, provided that $H$ has a complete $s$-coloring. 

In 1967 Harary,  Hedetniemi, and Prins studied interpolation property for complete coloring of graphs and established the following result.
\begin{thm}{\rm (\cite{Harary-Hedetniemi-Prins})}\label{thm:graphs}
{Every graph $G$ admits a complete  $t$-coloring for every $t$ with  $\chi(G)\le  t\le \psi(G)$.
}\end{thm}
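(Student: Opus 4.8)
\bigskip
\noindent\textbf{Proof proposal.}

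The plan is to dispose of the two endpoints of the range directly and then run an induction on $|V(G)|$ that handles every intermediate value of $t$ in one stroke by identifying a well-chosen pair of non-adjacent vertices. For $t=\psi(G)$ there is nothing to prove. For $t=\chi(G)$ I would observe that every minimum proper coloring is automatically complete: if two of its color classes had no edge between them, recoloring one of them with the color of the other would yield a proper coloring with $\chi(G)-1$ colors, contradicting minimality. So it remains to treat $\chi(G)<t<\psi(G)$, which I would do by induction on $|V(G)|$.

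For the inductive step, note first that $|V(G)|>\psi(G)$: a graph on exactly $\psi(G)$ vertices carrying a complete $\psi(G)$-coloring would have every color class a singleton and hence every pair of vertices adjacent, i.e.\ it would be $K_{\psi(G)}$, contradicting $\chi(G)<\psi(G)$; and fewer than $\psi(G)$ vertices cannot carry $\psi(G)$ colors at all. Fix a complete $\psi(G)$-coloring; by pigeonhole some color class contains two distinct, hence non-adjacent, vertices $x,y$. Let $G'$ be obtained from $G$ by identifying $x$ and $y$ and deleting the resulting parallel edges. I would then verify two ``surgery'' facts: (i) $\psi(G')=\psi(G)$, because the fixed complete $\psi(G)$-coloring descends to $G'$ (as $x$ and $y$ share a color) while, conversely, any complete coloring of $G'$ lifts to $G$ (as $x\not\sim y$); and (ii) $\chi(G')\le\chi(G)+1$, seen by taking a $\chi(G)$-coloring of $G$ and, if $x$ and $y$ differ on it, giving the merged vertex a fresh color.

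Granting these, the bookkeeping closes the argument: from $\chi(G)<t$ and (ii) we get $\chi(G')\le\chi(G)+1\le t$, while (i) gives $t<\psi(G)=\psi(G')$; thus $\chi(G')\le t\le\psi(G')$ with $|V(G')|<|V(G)|$, so the induction hypothesis supplies a complete $t$-coloring of $G'$, which lifts to a complete $t$-coloring of $G$. The passage of colorings between $G$ and $G'$ rests on one fact worth isolating: identifying two vertices lying in a common color class never deletes a witnessing edge for a pair of the other colors (such an edge meets $\{x,y\}$ in at most one vertex, so it survives in $G'$), and conversely every edge of $G'$ has a preimage edge in $G$ carrying the same pair of colors; properness survives in both directions precisely because $x\not\sim y$.

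The one place where some care is genuinely needed is fact (ii) --- that identifying a non-adjacent pair can raise the chromatic number by at most $1$ --- together with the accompanying observation that this is exactly why the hypothesis at the lower end must be the strict inequality $\chi(G)<t$: the single unit of slack $\chi(G)\le t-1$ is precisely what absorbs the possible jump in $\chi$ under the identification, whereas the upper end needs no slack since $\psi$ does not decrease under this particular identification. With this in hand the induction runs without further obstacle, the base case being immediate (for $|V(G)|$ small the interval $[\chi(G),\psi(G)]$ degenerates to a single value).
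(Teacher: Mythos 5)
Your proof is correct. Note that the paper itself gives no proof of this statement --- it is quoted as a known result of Harary, Hedetniemi, and Prins --- so there is nothing internal to compare against; your argument is essentially the classical one via elementary homomorphisms: identify two non-adjacent vertices sharing a color in a maximum complete coloring, check that this identification preserves $\psi$ and raises $\chi$ by at most one, and induct on the number of vertices, with the endpoint $t=\chi(G)$ handled by the observation that every minimum proper coloring is automatically complete. All the surgery facts you isolate (descent and lifting of complete colorings across the identification, and the bound $\chi(G')\le\chi(G)+1$) are verified correctly, and the bookkeeping $\chi(G')\le\chi(G)+1\le t<\psi(G)=\psi(G')$ closes the induction as claimed.
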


Recently, Edwards and Rz\c a\.{z}ewski (2020) showed that Theorem~\ref{thm:graphs} cannot be developed to $k$-uniform hypergraphs for all integers $k$ with $k\ge 9$. 
\begin{thm}{\rm (\cite{EDWARDS2020111673})}\label{thm:k:9}
{Let $k$ be a positive integer with $k\ge 9$. 
There exists a $k$-uniform hypergraph $H$  which has a complete  $\chi(H)$-coloring, 
 and a complete $\psi(H)$-coloring, but no complete coloring for some $t$ 
with  $\chi(H)< t<\psi(H)$.
}\end{thm}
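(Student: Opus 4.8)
The plan is to give an explicit construction, as a disjoint union $H=H_1\sqcup H_2$ of two $k$-uniform hypergraphs, and to read off every parameter by hand. The starting point is that proper colorings of a hypergraph are exactly the proper colorings of its \emph{conflict graph} (the graph on $V(H)$ joining two vertices that lie in a common hyperedge); hence for a disjoint union $\chi(H)=\max\{\chi(H_1),\chi(H_2)\}$, and a complete $t$-coloring is a proper $t$-coloring together with the demand that every one of the $\binom{t}{k}$ color-$k$-sets occurs on some hyperedge. A second, disjoint-union-specific observation: if the two parts use color sets $C_1,C_2$, then a $k$-set meeting both $C_1\setminus C_2$ and $C_2\setminus C_1$ is realized by neither part, so a complete coloring forces $C_1\subseteq C_2$ or $C_2\subseteq C_1$; thus one part (say $H_2$) carries all $t$ colors while $H_1$ carries a sub-palette, and $H_2$ must realize every color-$k$-set that $H_1$ does not. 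Accordingly I would take $H_1$ to be a small, rigid gadget (a complete $k$-uniform hypergraph on a few vertices, or such a hypergraph with a carefully chosen family of hyperedges removed) which pins down $\chi(H)$ and confines the few-color complete colorings, and $H_2$ to be a larger gadget rich enough to realize many color-$k$-sets, so that a complete coloring with many colors is available.

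First I would settle the two endpoints: compute $\chi(H)$ and check directly that a complete $\chi(H)$-coloring exists, by coloring $H_1$ optimally and squeezing $H_2$'s vertices into the same palette (so that $H_2$'s hyperedges only have to match the few color-$k$-sets still required, which the rigid design of $H_1$ arranges); and exhibit a complete $\psi(H)$-coloring by giving $H_2$ a rich coloring on the full palette and placing $H_1$'s small palette inside it. I would also confirm directly that no complete $t$-coloring exists for $t>\psi(H)$, so that $\psi(H)$ is indeed the claimed value.

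The crux — and the step I expect to be the main obstacle — is to rule out a complete $t_0$-coloring for some intermediate $t_0$ with $\chi(H)<t_0<\psi(H)$. Here I would argue by contradiction: from the disjoint-union constraint, $H_2$ carries all $t_0$ colors and $H_1$ carries some sub-palette $C_1$; the rigid gadget $H_1$ can itself realize only a bounded number of color-$k$-sets — those lying inside $C_1$ in the patterns its (reduced) edge set allows — so $H_2$ would have to realize all remaining color-$k$-sets of $[t_0]$ under a single proper $t_0$-coloring; then a counting estimate shows that the number of color-$k$-sets $H_2$ cannot possibly cover, given the way its hyperedges are distributed over its vertices, exceeds the number $H_1$ can absorb — precisely for the intermediate $t_0$, while at both extremes the same balance of "room versus discount" flips in favor of feasibility. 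Making this inequality fail exactly in the intermediate range while holding at both ends is the delicate part, and it is exactly here that $k\ge 9$ is forced: the gadgets must be wide enough (morally, $k$ gets split into blocks each of at least a constant size) for the competing binomial quantities to separate.

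Finally I would tidy the bookkeeping: verify $\chi(H)<t_0<\psi(H)$ so the failure is genuinely an interpolation failure, and record the precise values of $\chi(H)$, $\psi(H)$ and $t_0$ in terms of $k$. The principal risk throughout is miscalibrating the rigidity of $H_1$: too rigid and $\psi(H)$ collapses to $\chi(H)$ (no gap at all), too flexible and interpolation is restored. Threading this needle is the whole game, and is why even this weaker statement is restricted to $k$ above an explicit threshold.
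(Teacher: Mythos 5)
Your proposal is a strategy outline rather than a proof: the one step that carries all the content --- exhibiting concrete gadgets $H_1$, $H_2$ and carrying out the counting that rules out some intermediate $t_0$ --- is left entirely unspecified. The preliminary observation about disjoint unions is correct (since every hyperedge lies wholly in one part, in any complete $t$-coloring with $t\ge k$ the two palettes must be nested, because a $k$-set of colors meeting both differences would be realized on no hyperedge), and it is a reasonable organizing principle. But ``a small, rigid gadget, possibly a complete $k$-uniform hypergraph with a carefully chosen family of hyperedges removed'' and ``a counting estimate shows that the number of color-$k$-sets $H_2$ cannot cover exceeds the number $H_1$ can absorb'' name the difficulty without resolving it: nothing in the write-up lets a reader check that such a pair of gadgets exists, what $\chi(H)$, $\psi(H)$ and the failing $t_0$ actually are, or why the balance ``flips'' precisely in the middle of the range while holding at both ends. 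The closing claim that $k\ge 9$ is forced by the separation of ``competing binomial quantities'' is likewise unsupported --- and the phenomenon in fact already occurs for $k=3$ (Theorem~\ref{thm:9v} exhibits a $3$-uniform hypergraph on $9$ vertices with $\chi=3$, $\psi=5$ and no complete $4$-coloring), so whatever obstruction you have in mind cannot be intrinsic to small $k$.

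For comparison, the construction actually used here (Section~\ref{sec:k-uniform}, which subsumes the $k\ge 9$ statement) is a single hypergraph, not a disjoint union: the vertex set is a $k\times r$ grid $\{v_{i,j}\}$, every hyperedge takes one vertex from each of the $k$ ``parts'' at $k$ distinct ``positions'', and the edge set is $E_1\cup E_2$ where $E_1$ consists of position sequences with at most one adjacent pair and $E_2$ of strictly increasing position sequences. Coloring by part gives a complete $k$-coloring, coloring by position gives a complete $r$-coloring, and for $\frac{k-2}{k-1}r+k+1\le t<r$ a structural analysis (every color used in two or more parts fills exactly one position; each part retains exactly one private color) produces $k-1$ consecutive positions whose colors, together with one private color, form a $k$-set that no hyperedge of $E_1\cup E_2$ can realize. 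If you want to pursue the disjoint-union route you must produce the gadgets explicitly and verify all three regimes; as it stands the argument has no verifiable content at its crux.
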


In addition, they asked the following two problems for generalizing  Theorem~\ref{thm:k:9} to $3$-uniform hypergraphs,
and  for studying a weaker version of interpolation property of complete coloring of hypergraphs.
\begin{alphprob}{\rm (Edwards and Rz\c a\.{z}ewski (2020) \cite{EDWARDS2020111673})}\label{prob:3}
{Does there exist a $3$-uniform example of a hypergraph for which interpolation fails?
}\end{alphprob}
\begin{alphprob}{\rm (Edwards and Rz\c a\.{z}ewski (2020) \cite{EDWARDS2020111673})}\label{prob:4}
{Does there exist  a  hypergraph $H$ with a complete $\chi(H)$-coloring and a complete $\psi(H)$-coloring, 
but no  complete $t$-coloring for every $t$ satisfying $\chi(H)< t<\psi(H)$ in which  $\psi(H)\ge \chi(H)+2$?
}\end{alphprob}

In this paper, we generalize Theorem~\ref{thm:k:9} to  all cases $k$ with $k\ge 3$ by modifying some parts of their proof.
In Section~\ref{sec:3-uniform}, we  answer Problem~\ref{prob:4} positively by giving several kinds of  $3$-uniform hypergraphs, which consequently shows that  the answer of Problem~\ref{prob:3} is positive. In particular, we form the following stronger assertion.
\begin{thm}
{There exists a  $3$-uniform hypergraph $H$ with a complete $\chi(H)$-coloring and a complete $\psi(H)$-coloring, 
but no  complete $t$-coloring for every $t$ satisfying $\chi(H)< t<\psi(H)$ in which  $\psi(H)\ge 2\chi(H)$.
}\end{thm}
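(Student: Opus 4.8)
The plan is to build an explicit $3$-uniform hypergraph $H$ realizing the required gadget, following the strategy of Edwards and Rz\c a\.{z}ewski but with a more aggressive "amplification" so that $\psi(H)$ is at least twice $\chi(H)$. First I would fix a base value $c = \chi(H)$ and design $H$ as a disjoint-ish union of two interacting blocks: a \emph{rigidity block} $R$, whose hyperedges force the proper chromatic number to be exactly $c$ and, more importantly, make every complete $t$-coloring with $c < t$ violate some color-triple requirement unless $t$ is very large; and a \emph{padding block} $P$, a cloud of mutually "almost independent" vertices whose only role is to absorb a huge palette of extra colors and thereby push $\psi(H)$ up to $2c$ (or beyond). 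The hyperedges of $P$ are chosen so that, once one is allowed $t \ge 2c$ colors, a complete $t$-coloring can be assembled (every $3$-subset of colors appearing in some hyperedge), while for every intermediate $t$ with $c < t < 2c$ the interaction hyperedges between $R$ and $P$ create an unavoidable monochromatic pair inside a hyperedge. The key design principle, as in the earlier work, is a counting/parity obstruction: the number of color-triples that \emph{must} be covered grows like $\binom{t}{3}$, while the number of hyperedges available to cover them, together with the degree constraints imposed by the rigidity block, only suffices when $t = c$ or when $t$ is large enough that the padding block's independent structure kicks in.

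The concrete steps are: (1) specify $R$ with $O(c)$ vertices and hyperedges so that a short case analysis gives $\chi(R) = c$ and produces an explicit complete $c$-coloring; (2) verify that any proper coloring of $R$ using $t$ colors with $c < t < 2c$ forces two of the "special" vertices to repeat a color, hence no hyperedge can witness certain triples, so there is no complete $t$-coloring of $H$ in that range — this is where the hyperedges are tuned so that a surplus of colors is self-defeating; (3) specify $P$: take a set of new vertices and new "spreading" hyperedges, each of size $3$, arranged (e.g.\ via a Steiner-type or affine-plane-type packing on a color set of size $\approx 2c$) so that every $3$-subset of a palette of size $2c$ is realized by some hyperedge, yielding an explicit complete $2c$-coloring of $H$; (4) glue $R$ and $P$ by adding a controlled set of connecting hyperedges that (a) do not lower $\chi$, (b) do not destroy the complete $c$- or $2c$-colorings, but (c) do enforce the obstruction in step (2); (5) finally compute $\psi(H)$ exactly, or at least show $\psi(H) \ge 2c$, and confirm $\chi(H) = c$, so that the ratio $\psi(H)/\chi(H) \ge 2$ as claimed. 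Throughout one should choose $c$ large enough (a fixed small constant will do, and the construction scales) so all the inequalities in the counting argument are comfortably satisfied.

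The main obstacle I anticipate is step (4): making the connecting hyperedges do triple duty. It is easy to force $\chi(H) = c$ and easy to inflate $\psi(H)$, but ensuring that the \emph{intermediate} values $t \in (c, 2c)$ are \emph{all} blocked — simultaneously, with a single fixed hypergraph — requires the rigidity block to constrain the coloring of the padding block's "interface" vertices tightly enough that no intermediate palette can be completed, yet loosely enough that the extreme palettes ($c$ and $2c$) still work. The cleanest way to manage this is an invariant argument: identify a small vertex set $W$ and show that in any complete $t$-coloring the colors on $W$ must form a set of size exactly $c$ (forced by $R$'s hyperedges) but the completeness condition on the $P$-hyperedges incident to $W$ demands $|{\rm colors}(W)| \geq t - c$; combining, $t \le 2c$, and the equality/feasibility analysis pins down exactly which $t$ admit a complete coloring. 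Verifying that this invariant is genuinely forced — i.e.\ that one cannot cheat by recoloring vertices outside $W$ — will be the technical heart of the argument, and I would devote the bulk of the write-up in Section~\ref{sec:3-uniform} to it.
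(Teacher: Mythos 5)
Your submission is a plan, not a proof, and the gap is not merely one of polish: the central mechanism you propose does not do the work you need it to do. The counting obstruction you invoke --- $\binom{t}{3}$ triples to be covered versus the number of available hyperedges --- only yields an \emph{upper bound} on $\psi(H)$ (this is exactly how the paper shows $\psi(H)\le 6$, since $\binom{7}{3}=35$ exceeds the $20$ hyperedges); it is monotone in $t$ and therefore cannot distinguish an intermediate value $t$ from the achievable extremes, so it can never certify that some $t$ strictly between $\chi(H)$ and $\psi(H)$ fails while $\psi(H)$ itself succeeds. Likewise, your ``invariant'' for the interface set $W$ ($|{\rm colors}(W)|=c$ forced by $R$, $|{\rm colors}(W)|\ge t-c$ forced by $P$) combines only to $t\le 2c$, i.e.\ again an upper bound on $\psi$; it says nothing about why $t=c+1,\dots,2c-1$ should all be infeasible. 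You flag step (4) as the technical heart but leave it entirely unresolved, and no explicit hypergraph, hyperedge list, or verification of $\chi$, $\psi$, or the blocked range is ever produced. As written, the argument establishes nothing beyond what a generic padding construction gives.

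For comparison, the paper's proof is a single explicit $3$-uniform hypergraph on $12$ vertices with $20$ hyperedges (found by splitting each vertex of the complete $3$-uniform hypergraph on $6$ vertices and searching), with $\chi(H)=3$ and $\psi(H)=6$, so only $t\in\{4,5\}$ need to be excluded. The mechanism that kills the intermediate values is structural, not counting-based: there are exactly three independent sets of size four, every independent set of size three lies inside one of them, and each of these four-sets meets \emph{every} hyperedge. Hence any $4$- or $5$-coloring either has a color class of size $\ge 4$ (which is then one of these covering sets, so the triad of the remaining colors is never realized) or runs into a pigeonhole contradiction on $12$ vertices. If you want to salvage your modular approach, you would need your gadget to produce an analogous phenomenon --- an independent set that is simultaneously a vertex cover and is forced to be nearly monochromatic for every intermediate $t$ --- and you would need to prove this for \emph{each} $t$ in the gap, which for a scalable construction with $c$ growing is a substantially harder task than the single small example the theorem actually requires.
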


Recently, Matsumoto and the third author (2020) investigated complete coloring for a special family of face hypergraphs
 using terms of facial complete coloring of planar triangulations. They put forward the following conjecture in their paper 
 to suggest a family of hypergraphs satisfying interpolation property.  In the rest of this paper, we disprove this conjecture by a particular hypergraph of order $12$ which seems to be the unique exceptional example for this conjecture.  
It is known that a planar triangulation  is $3$-colorable if and only if  whose  degrees are even~\cite{Tsai-West}. 
\begin{conj}{\rm (Matsumoto and Ohno (2020) \cite{Matsumoto-Ohno})}\label{conj}
{Let $H$ be a $3$-uniform face hypergraph obtained from a planar triangulation. 
If $H$ is   $3$-colorable, then it admits a complete $t$-coloring for every $t$ with $\chi(H) \le t\le \psi(H)$.
}\end{conj}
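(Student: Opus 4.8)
Conjecture~\ref{conj} is false, and the plan is to disprove it by an explicit counterexample: a planar triangulation $T$, all of whose vertex degrees are even (equivalently, by the cited fact, whose face hypergraph $H$ is $3$-colourable), for which $H$ violates the interpolation property. For any such $T$ one has $\chi(H)=3$ --- the lower bound is trivial because every hyperedge has three vertices, and a proper $3$-colouring of $T$ exists and is a proper colouring of $H$. Moreover every proper $3$-colouring of $T$ is automatically a \emph{complete} $3$-colouring of $H$, since in a $3$-coloured triangulation each face is rainbow, so the only colour triple $\{1,2,3\}$ is realised. Hence $H$ always carries a complete $\chi(H)$-colouring, and it suffices to produce a $T$ for which $H$ has a complete $\psi(H)$-colouring with $\psi(H)\ge 5$ but no complete $t$-colouring for some $t$ with $3<t<\psi(H)$; the most economical target is $\psi(H)=6$ with no complete $4$-colouring.

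The natural place to search is $|V(T)|=12$: then $T$ has $2\cdot 12-4=20$ faces and $\binom{6}{3}=20$, so a complete $6$-colouring of $H$ is forced to be a \emph{bijection} between the faces of $T$ and the $3$-subsets of a $6$-set --- a very rigid object whose existence and near-uniqueness can be pinned down. First I would restrict the degree sequence: it sums to $2(3\cdot 12-6)=60$, which with all entries even (hence in $\{4,6\}$) forces exactly six vertices of degree $4$ and six of degree $6$. Then I would enumerate the triangulations on $12$ vertices with this degree sequence (for instance starting from the icosahedron and applying edge flips to make all degrees even), and for each search for a proper $6$-colouring $c$ of $T$ --- a short count shows each colour must then be used exactly twice, once on a degree-$4$ vertex and once on a degree-$6$ one --- for which $\{\,c(x)c(y)c(z):xyz\text{ a face}\,\}$ is exactly the family of all $20$ triples of $[6]$. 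This is a small backtracking problem (by hand, exploiting the forced structure, or by computer), which I expect to single out one triangulation $T_0$ and, up to symmetry, one such colouring.

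It then remains to read off $\chi(H)=3$ and $\psi(H)=6$ (the bound $\psi(H)\le 6$ is immediate, since a complete $t$-colouring requires all $\binom{t}{3}$ triples to appear among only $20$ hyperedges) and --- the crux --- to prove that $H$ has \textbf{no} complete $4$-colouring, i.e.\ that every proper $4$-colouring of $T_0$ fails to realise all four colour triples among its faces. Here I would use that a proper $4$-colouring attaches to each face the colour it omits, so completeness is exactly the demand that all four colours be omitted somewhere; the claim is that the concrete adjacency pattern of $T_0$ (the six degree-$4$ vertices, the six degree-$6$ vertices, and the way their faces interlock) always blocks this, to be shown either by a parity/counting argument or, failing a slick one, by a direct finite analysis of the proper $4$-colourings of $T_0$. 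One should also determine whether a complete $5$-colouring exists; either answer already disproves Conjecture~\ref{conj}, but recording it sharpens the example (if $t=5$ also fails, the gap reaches $\psi(H)\ge\chi(H)+3$). The main obstacle is precisely this non-existence proof for $t=4$: it is not a soft double-counting statement and genuinely rests on the combinatorics of the specific order-$12$ triangulation, which is also why the example is expected to be essentially unique.
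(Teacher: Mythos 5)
Your overall strategy coincides with the paper's: the counterexample to Conjecture~\ref{conj} is indeed a $3$-colorable (all degrees even, six of degree $4$ and six of degree $6$) triangulation on $12$ vertices, found by computer search, whose face hypergraph has $\chi(H)=3$ and $\psi(H)=6$ because $20=2\cdot 12-4=\binom{6}{3}$, and your preliminary reductions (every proper $3$-coloring is automatically complete, $\psi(H)\le 6$ by counting triples, a complete $6$-coloring forces each color class to consist of one degree-$4$ and one degree-$6$ vertex) are all correct and match the paper's framework.

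However, there is a genuine gap at what you yourself call the crux. First, you never actually exhibit the triangulation nor prove the non-existence of a complete $t$-coloring for any intermediate $t$; the proposal defers this to ``a parity/counting argument or \ldots a direct finite analysis,'' which is precisely the content of the theorem. Second, and more seriously, you commit to the wrong value of $t$: you aim to show that the face hypergraph has \emph{no complete $4$-coloring}, whereas the paper's example --- which it reports to be the \emph{unique} $3$-colorable triangulation on up to $23$ vertices violating interpolation --- fails only at $t=5$. (The paper proves ``no complete $5$-coloring'' and conspicuously does not claim ``no complete $4$-coloring''; had the latter held it would give the stronger gap $\psi\ge\chi+3$ and would surely have been stated.) Your proposed obstruction for $t=4$ --- that in every proper $4$-coloring some color class covers all faces, so the triple of the remaining three colors never appears --- is exactly the mechanism the paper uses for its \emph{non-triangulation} examples in Section~\ref{sec:3-uniform}, but for the triangulation it almost certainly does not apply at $t=4$, so your search would come up empty and the plan would stall. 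The actual proof for $t=5$ is of a different character: the triangulation is an octahedron $v_1\cdots v_6$ with six further vertices $w_1,\ldots,w_6$, and the argument pins down the colors of the $w_i$, then uses the fact that in any proper coloring of the octahedron each pair of colors lies in at most two of the face-triads, to derive in each of three cases a triple of colors that cannot appear on any face. Your hedge that one ``should also determine whether a complete $5$-colouring exists'' is framed as an optional sharpening after the $t=4$ argument, not as the place where the conjecture actually breaks, so as written the proposal does not reach a disproof.
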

%==============================================================================
%
\section{The existence of  uniform hypergraphs for which interpolation property fails}
\label{sec:k-uniform}
The following theorem makes a stronger version for Theorem~\ref{thm:k:9}. 
\begin{thm}
{Let $k$ be a positive integer with $k\ge 3$. There exists a $k$-uniform hypergraph $H$  
which has a complete  $\chi(H)$-coloring and a complete $\psi(H)$-coloring, but no complete $t$-coloring for some $t$ with  $\chi(H)< t<\psi(H)$.
}\end{thm}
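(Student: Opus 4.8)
The plan is to build on the construction behind Theorem~\ref{thm:k:9}, pushing it down from $k\ge 9$ to $k\ge 3$ by enlarging one of its parts. Two reformulations make everything cleaner. First, for any hypergraph $H$ a proper (rainbow) colouring of $H$ is the same thing as a proper colouring of the graph on $V(H)$ in which two vertices are adjacent iff they lie in a common hyperedge; in particular $\chi(H)$ equals the chromatic number of that graph. Second, a proper $t$-colouring of a $k$-uniform $H$ is complete iff every $k$-subset of the palette $\{1,\dots,t\}$ is the colour set of some hyperedge. Roughly speaking, the hypergraph $H_k$ of Theorem~\ref{thm:k:9} can be viewed as assembled from (a) a \emph{template} part $T$ whose proper colourings are rigid, so the colours playing ``special'' roles are essentially determined; (b) a \emph{library} part $L$, a large family of $k$-edges which, once enough colours are available, can be coloured so as to realise every required $k$-subset of colours; and (c) a set of \emph{linking} hyperedges that force the colours occurring on $L$ to interact with the template colours in a prescribed way. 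The rigidity of $T$, together with the links, yields: with $\chi(H_k)$ colours everything is forced into a tight pattern under which all the needed colour sets appear; with $\psi(H_k)$ colours the library can be ``spread'' to realise all $\binom{\psi(H_k)}{k}$ colour sets; and for one intermediate value $t_0$ the number of hyperedges able to realise the colour-$k$-subsets that are \emph{not} forced to meet the template palette is strictly less than the number of such subsets, so no complete $t_0$-colouring can exist.

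First I would isolate precisely the inequality in that last point, together with the requirement $\chi(H_k)<t_0<\psi(H_k)$; this is exactly where $k\ge 9$ is used, either because for small $k$ the library is too small to realise $\binom{\psi(H_k)}{k}$ subsets or because the interval $(\chi(H_k),\psi(H_k))$ degenerates. The modification is to enlarge the library: add extra $k$-cliques on fresh vertices (together with a few additional linking hyperedges), and choose the target value $\psi(H_k)$ and the bad value $t_0$ anew, so that $\chi(H_k)<t_0<\psi(H_k)$ and the counting inequality both hold for \emph{every} $k\ge 3$. I would then re-establish the two positive statements directly: exhibit a complete $\chi(H_k)$-colouring by taking the rigid template colouring and colouring everything else minimally, and exhibit a complete $\psi(H_k)$-colouring by combining the template colouring with a ``rainbow spread'' of the enlarged library running over all required colour sets. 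These two verifications are routine once the parts are pinned down.

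The substantive step is re-proving the negative statement for the new $t_0$: that every proper $t_0$-colouring of $H_k$ leaves some $k$-subset of $\{1,\dots,t_0\}$ unrealised. I would argue by contradiction from a hypothetical complete $t_0$-colouring $c$: use the rigidity of the template to pin down the colour classes of the template vertices, use the linking hyperedges to show that every hyperedge's colour set meets the template palette in the prescribed way, and conclude that the ``template-disjoint'' $k$-subsets of $\{1,\dots,t_0\}$ — of which there are now more than the number of hyperedges that could possibly realise them, by the restored inequality — cannot all appear. The main obstacle is that the whole construction is fragile: the placement of the new library cliques and links must be chosen so that, under \emph{any} proper $t_0$-colouring, their colour sets always fall among subsets already accounted for and never supply a ``new'' realisation of one of the hard $k$-subsets. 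Checking this is a somewhat delicate case analysis on which colours can occur on the new vertices, and it is genuinely here that the small values $3\le k\le 8$ behave differently from $k\ge 9$; the binomial estimates, the chromatic-number computation via the $2$-section, and the two explicit complete colourings are bookkeeping by comparison. (For $k=3$ one can alternatively appeal to the $3$-uniform constructions of Section~\ref{sec:3-uniform}, so that only $k\in\{4,\dots,8\}$ really need the modified argument.)
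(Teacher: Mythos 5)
Your overall strategy --- modify the Edwards--Rz\c a\.{z}ewski construction so that it covers $4\le k\le 8$, and fall back on the $3$-uniform examples of Section~\ref{sec:3-uniform} for $k=3$ --- is the right one, and the reduction to $k\ge 4$ is exactly what the paper does (it cites Theorem~\ref{thm:9v} for $k=3$). But as written the proposal has a genuine gap: the hypergraph is never actually constructed. You say the modification is to ``enlarge the library: add extra $k$-cliques on fresh vertices (together with a few additional linking hyperedges), and choose the target value $\psi(H_k)$ and the bad value $t_0$ anew,'' but you never specify which cliques, which links, which $t_0$, or why the inequality you invoke would then hold for all $k\ge 4$. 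You then explicitly defer the one substantive step --- showing that no proper $t_0$-colouring can be complete --- to ``a somewhat delicate case analysis'' which you yourself identify as the exact place where $4\le k\le 8$ differs from $k\ge 9$, and that analysis is not carried out. A proof that names its hardest step and omits it is a plan, not a proof.

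It is also worth flagging that your mental model of the construction (rigid template plus library plus links, with the obstruction being a count of available hyperedges against the number of $k$-subsets to be realised) does not match how the actual construction works, so the inequality you propose to ``restore'' is not the right target. The paper's hypergraph is a $k\times r$ grid of vertices $v_{i,j}$ ($k$ parts times $r$ positions, $r$ large); hyperedges are transversals of the parts whose position-sequences either contain at most one consecutive pair ($E_1$) or are strictly increasing ($E_2$). The complete $\chi$-colouring colours by parts, the complete $\psi$-colouring colours by positions, and the non-existence of a complete $t$-colouring for $\frac{k-2}{k-1}r+k+1\le t<r$ is a structural argument: every colour either fills an entire position or is confined to a single part, a pigeonhole step produces $k-1$ consecutive positions coloured entirely by ``position-colours,'' and the $k$-set consisting of those $k-1$ colours together with one ``part-colour'' cannot lie in any hyperedge, since realising it forces three positions among $\{s,s+1,s+2\}$ (killing membership in $E_1$) and breaks monotonicity (killing membership in $E_2$). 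The adjustment needed for small $k$ is thus in the position constraints of the original grid, not in grafting on new cliques, and no hyperedge-versus-subset count of the kind you describe appears anywhere in the argument.
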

\begin{proof}{ 
We may assume that $k\ge 4$, as the assertion holds for $k=3$ with respect to Theorem~\ref{thm:9v}. 
Let $r$ be a large enough integer number compared to  $k$. 
Define $ H$ to be the $k$-uniform hypergraph with 
$V (H) =\{v_{i,j}:1\le i\le k, \,1\le j\le r\}$
 and $E(H) ={E}_1\cup {E}_2$ such that
$${E}_1= \{\{v_{i,p_i} :  1\leq i\leq k\}: (p_1,\ldots ,p_k) \in \mathcal{A}  \text{\, and } f(p_1,\ldots , p_k) \leq1\},\text{ and }$$
$${E}_2=\{\{v_{i,p_i} :  1\leq i\leq k\}: (p_1,\ldots ,p_k) \in \mathcal{A}  \text{\, and } p_1<\cdots < p_k\},$$
where $\mathcal{A}$  denotes the set of all sequences  $(p_1,\ldots, p_k)$ such that all $p_i$ are distinct and $1 \leq p_i \leq r$ and $f(p_1,\ldots , p_k) =|\{ (i, j) : |p_i - p_j | = 1 \text{ and } 1 \leq i < j \leq k\}|$. 
We call the $i$-th  part of $H$  as the set of all vertices $v_{i,j}$ with $1\le j\le r$, and
 call the $j$-th position  of $H$ as the set of all vertices $v_{i,j}$ with $1\le i\le k$.
According to this construction, one can  prove the following three assertions:
\begin{enumerate}{
\item [(a1)] There is no hyperedge including two vertices of the same position.\label{Condition 1}
\item [(a2)] There is no hyperedge including two vertices of the same part.\label{Condition 2}
\item [(a3)] For any two vertices in different parts and different positions,  there is a hyperedge including them.\label{Condition 3}
}\end{enumerate}
%
%-----------------------------------------------------------
We  prove  only the last assertion as the other ones are obvious. 
 Let $v_{i,j}$ and $v_{i',j'}$ be two arbitrary vertices of $H$ 
in different  parts and different positions  so that $i\neq i'$ and $j\neq j'$.
Since $r$ is large enough, there is an integer $s$ with $1\le s \le r$ such that
$ \{j,j'\}  \cap \{s,\ldots, s+2k+2\}=\emptyset$.
Consider the sequence $(p_1,\ldots, p_k)$ satisfying $p_i=j$, $p_{i'}=j'$, and $p_{t}=s+2t$ 
for every $t\in \{1,\ldots, k\}\setminus \{i,i'\}$. 
Obviously, this sequence  is in $\mathcal{A}$ and $f(p_1,\ldots, p_k)\le 1$. 
Thus the hyperedge  corresponding to this sequence must be in $E_1$. 
Note that this hyperedge includes both of $v_{i,j}$ and $v_{i',j'}$. Hence the claim holds.
%

%-----------------------------------------------------------
 To show that this hypergraph has a complete $k$-coloring, we take color set 
$\{c_1, c_2,\ldots,c_k\}$ and for each $1\leq i \leq k$, 
we color  all vertices in the $i$-th part with the color  $c_i$. By $(a_2)$ this is a 
proper coloring and each hyperedge contains  all $k$ colors.
 For complete $r$-coloring,  we take a color  set  $\{c_1, c_2,\ldots,c_r\}$ 
and for each $1\leq j \leq r$,
 we color  all vertices in the $j$-th position  with the color  $c_j$. 
 According to $(a_1)$, it is a proper coloring. 
In addition, if  $\{c_{p_1}, c_{p_2},\ldots, c_{p_k}\}$ is a $k$-subset of
  $\{c_1, c_2,\ldots,c_r\}$ with $p_1< \cdots< p_k$, 
then the hyperedge $\{v_{1,p_1}, v_{2,p_2},\ldots, v_{k,p_k}\}$ of  ${E}_2$ 
contains  this color  set.
Therefore, $\chi(H) = k$ and $\psi(H)\ge r$. 

%-----------------------------------------------------------
Now, we show that $H$ has no complete $t$-coloring for every integer $t$ with $\frac{k-2}{k-1}r+k+1 \le t<r$. 
Suppose,  to the contrary, that $H$ has a complete $t$-coloring using colors $c_1, \ldots,c_t$.
Define $X$ to be the set of colors  appearing in at least two parts and 
define $Y$ to be the set of colors  appearing in only one part. 
We are going to prove the following two assertions:
\begin{enumerate}{
\item [(b1)] Each color of $X$ appears in only one position and all vertices of this position colored only by this color.\label{Condition 1}
\item [(b2)]  Each part has only one color from $Y$ so that $|Y|=k$ and $|X| = t - k$.\label{Condition 2}
}\end{enumerate}
Consider a color  $x \in X$.   
If $x\in X$   occurred in more than one
position, then by the definition of $X$, 
there must be two vertices having the same color $x$ with different parts and different positions.
Thus  by  $(a_3)$ there  is a hyperedge including both of them. 
This shows that  the coloring is not proper, a contradiction. 
Thus all occurrences of $x$ are in the same position. 
Now, since $|X|< r$, there is one position whose colors are not in $X$.
 In other words, there are $k$ vertices with different parts whose colors are in $Y$.
On the other hand, each part contains at most one color of $Y$; otherwise,  if two colors of $Y$ are in the same part, then by (a2) there is no hyperedge including them which is impossible.
 Therefore,  $|Y| = k$ and $|X| = t-k$.
Consequently, we can define $y_i$ to be the unique color in $Y$ appearing in the $i$-th part, where $1\le i\le k$.
Assume that the color $x\in X$ appears in the $j$-th position.
We are going to show that all vertices of this position are colored by this color.
If we consider a given arbitrary vertex $v_{i,j}$ of this position, then there is one hyperedge of $H$ containing all colors of the set
$\{y_1, \ldots ,y_{i-1}, x, y_{i+1}, \ldots, y_k\}$.
 Let   $(p_1,\ldots, p_k)\in \mathcal{A}$ be the sequence corresponding to this hyperedge. 
Obviously, the color of $v_{t,p_t}$ must be $y_t$  for every $t\in \{1,\ldots, k\}$ with $t\neq i$.
Thus  the color $x$ must be appeared on the $i$-th part, and so the vertex $v_{i,j}$ must be colored with $x$.
Therefore,  all of vertices of  the $j$-th position are  colored with the color $x$. 
Hence the assertions hold.

%-----------------------------------------------------------
Obviously, there are $r-|X|$ positions are not colored by colors of $X$. 
Since $r-|X| \le r/(k-1)-1$, we can conclude that there are $k-1$
 consecutive positions $\{s, s+1,\ldots, s+k-2\}$ of $H$  colored only with colors of $X$. 
Define $Z$ to be the set of all those $k-1$ colors along with the color $y_2$. 
By the assumption, there is  a hyperedge $e \in E(H) $ including all colors of $Z$. 
Let $(p_1,\ldots, p_k)\in \mathcal{A}$ be the sequence  corresponding to this hyperedge. 
Obviously,  by $(b2)$, the vertex $v_{2,p_2}$ must be colored by  $y_2$. 
We know that $\{p_1, \ldots, p_k\}\setminus \{p_2\}= \{s, s+1,\ldots, s+k-2\}$. 
Since $k\ge 4$, there must be three integers $a,b, c\in \{1,\ldots, k\}$ such that
$\{p_{a},p_{b}, p_{c}\}=\{s,s+1,s+2\}$.
Thus $f(p_1, \ldots, p_k)\ge 2$ and so $e\notin {E}_1$. 
Moreover, according to the situation of the position containing the color $y_2$, we have either 
 $\max\{p_1,p_3\}<p_2$  
or 
$p_2<\min\{p_1,p_3\}$ and so $e \notin {E}_2$.
This is a contradiction. Hence the theorem is proved.
}\end{proof}
%
%==============================================

\section{Answering to  Problem~\ref{prob:4} by $3$-uniform hypergraphs}
\label{sec:3-uniform}
In this section, we are going to answer to Problem~4 in~\cite{EDWARDS2020111673} by giving several kinds of $3$-uniform hypergraphs.
%==============================================
%
\subsection{A hypergraph of order $9$}
A positive answer to Problem~\ref{prob:4} is given in the following theorem. 
\begin{thm}\label{thm:9v}
{There exists a $3$-uniform hypergraph $H$ of order $9$ with a complete $\chi(H)$-coloring and a complete $\psi(H)$-coloring, 
but no  complete $t$-coloring for every $t$ satisfying $\chi(H)< t<\psi(H)$ in which  $\psi(H)\ge \chi(H)+2$.
}\end{thm}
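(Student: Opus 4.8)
The goal is to construct an explicit $3$-uniform hypergraph $H$ on $9$ vertices with $\chi(H)=3$, $\psi(H)\ge 5$, admitting complete $3$- and $\psi(H)$-colorings, but no complete $4$-coloring. The natural strategy is to mimic the structure of the $k$-uniform example in Section~\ref{sec:k-uniform} in its simplest case. I would arrange the $9$ vertices in a $3\times 3$ grid $v_{i,j}$ with $1\le i,j\le 3$, thinking of the rows as ``parts'' and the columns as ``positions'', and take as hyperedges certain transversals $\{v_{1,p_1},v_{2,p_2},v_{3,p_3}\}$ where $(p_1,p_2,p_3)$ is a permutation of $\{1,2,3\}$. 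Coloring by rows gives a complete $3$-coloring, and coloring by columns should give a complete $3$-coloring witnessing one end; to push $\psi(H)$ up to at least $5$ I would need a more refined coloring using $5$ colors, which forces me to choose the hyperedge set carefully — likely only a proper subset of all six transversals, or possibly a slightly different incidence pattern than a pure grid, so that a $5$-coloring exists while no $4$-coloring does.

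First I would fix the candidate hyperedge set and verify the analogues of (a1)–(a3): no hyperedge repeats a row or a column, and ideally any two vertices in distinct rows and distinct columns lie together in some hyperedge (or a suitably weakened version, since with only $9$ vertices the full grid machinery is tight). Second, I would exhibit the complete $3$-coloring (by rows) and the complete $5$-coloring explicitly — the $5$-coloring presumably colors some columns monochromatically (contributing ``$X$-type'' colors appearing in two rows) and splits the remaining vertices into singleton color classes (``$Y$-type''), exactly paralleling the $(b1)$–$(b2)$ dichotomy in the $k$-uniform proof. Third, and this is the crux, I would rule out a complete $4$-coloring: assuming one exists, I would run the same $X/Y$ argument — colors used in $\ge 2$ rows must be confined to a single column, the remaining colors partition into $3$ one-per-row classes, and then a counting/adjacency obstruction (the analogue of producing $f(p_1,\ldots,p_k)\ge 2$, here probably a parity or ``too few columns left'' argument specialized to $3\times 3$) shows some required $4$-subset of colors cannot appear in any hyperedge.

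The main obstacle will be the last step. With $k=3$ the slack in the general construction — where $r$ is taken ``large enough'' and one finds $k-1$ consecutive monochromatic positions — collapses entirely, so the contradiction cannot come from an asymptotic counting argument and must instead be a hard combinatorial case analysis on a fixed $9$-vertex object. I expect the proof to proceed by: (i) pinning down, up to symmetry, what a hypothetical complete $4$-coloring of $H$ must look like after applying $(b1)$–$(b2)$-type constraints (at most a handful of cases), and (ii) checking in each case that a specific pair or triple of colors fails to co-occur in a hyperedge, using the explicit hyperedge list. A secondary obstacle is designing the hyperedge set so that all three requirements hold simultaneously — $\chi=3$, a genuine complete $5$-coloring, and no complete $4$-coloring — which may require a small amount of experimentation (or a computer check) before settling on the right $9$-vertex hypergraph; once the hypergraph is fixed, the verifications (a1)–(a3) and the two positive colorings are routine, and only the non-$4$-colorability needs real work.
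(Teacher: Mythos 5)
There is a genuine gap: the theorem is an existence statement, and your proposal never actually produces the witness hypergraph. You outline a search strategy and concede that ``a small amount of experimentation (or a computer check)'' may be needed before settling on the right $9$-vertex object, but without the explicit hyperedge list neither the complete $5$-coloring nor the non-existence of a complete $4$-coloring can be verified, and both verifications depend entirely on that list. Worse, the one concrete candidate you lean toward cannot work: if the hyperedges are transversals $\{v_{1,p_1},v_{2,p_2},v_{3,p_3}\}$ with $(p_1,p_2,p_3)$ a permutation of $\{1,2,3\}$, there are only $3!=6$ of them, while a complete $5$-coloring forces every one of the $\binom{5}{3}=10$ color triples to appear on some hyperedge, so such a hypergraph has $\psi\le 4$ and can never satisfy $\psi(H)\ge\chi(H)+2$. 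This counting constraint (at least $10$ hyperedges needed for $\psi\ge 5$, fewer than $\binom{6}{3}=20$ to force $\psi\le 5$) is exactly what pins down the paper's example, which has precisely ten hyperedges on nine vertices, and it is absent from your plan.

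The paper's ruling-out of the complete $4$-coloring is also simpler than the $X/Y$ machinery you import from the general $k$-uniform construction: by pigeonhole some color class has at least three of the nine vertices; the hypergraph is built so that its only independent sets of size three are the three classes of the complete $3$-coloring, each of which meets every hyperedge; hence the remaining three colors cannot all occur together on any hyperedge, contradicting completeness. Your sketch gestures at ``a parity or too-few-columns-left argument'' but does not identify this covering obstruction, and in a fixed $9$-vertex example it is the covering property of the size-three independent sets --- not an analogue of the consecutive-positions argument --- that does the work. To repair the proof you would need to exhibit the ten hyperedges explicitly, check that the three row classes are the only independent triples and that each covers all hyperedges, and display the $5$-coloring.
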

\begin{proof}
{Let $H$ be the $3$-uniform hypergraph of order 9 whose incidence graph is shown in Figure~\ref{fig:comp3}.
If $H$ has a complete $k$-coloring for $k \ge 6$,
then it has at least twenty hyperedges.
However, $H$ has exactly ten hyperedges and hence $\psi(H) \le 5$.
In fact, $H$ has a complete $3$-coloring and a complete $5$-coloring (see Figures~\ref{fig:comp3} and~\ref{fig:comp5}, respectively). Therefore, $\chi(H) = 3$ and $\psi(H)=5$.

\begin{figure}[h]
 \begin{minipage}{0.5\hsize}
  \centering
   \includegraphics[scale = 1]{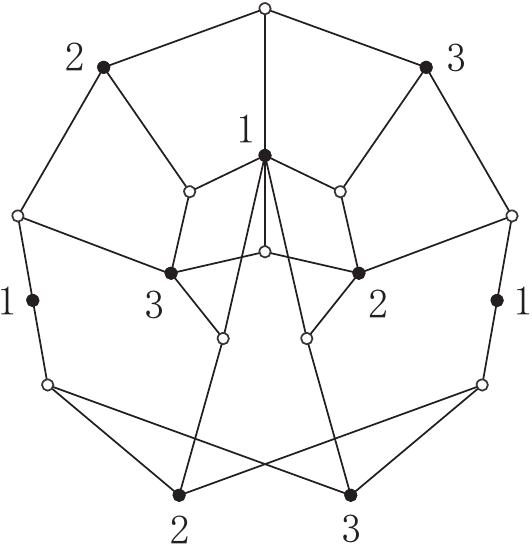}
  \caption{A complete $3$-coloring of $H$}
  \label{fig:comp3}
 \end{minipage}
 \begin{minipage}{0.5\hsize}
 \centering
  \includegraphics[scale = 1]{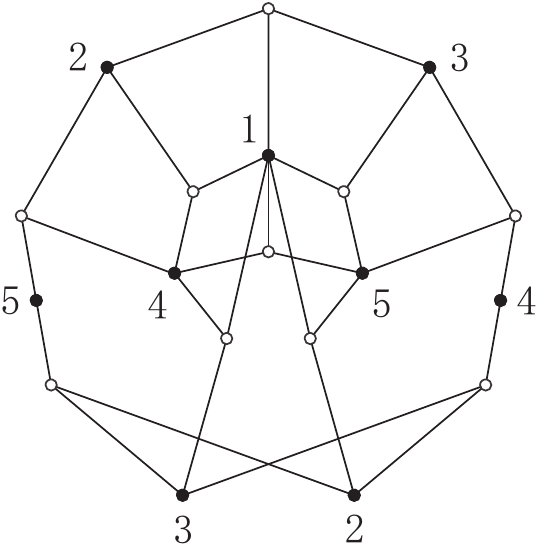}
  \caption{A complete $5$-coloring of $H$}
  \label{fig:comp5}
 \end{minipage}
\end{figure}

Next, we show that $H$ has no complete $4$-coloring.
Suppose, to the contrary, that $H$ has a complete $4$-coloring using colors $c_1,\ldots, c_4$.
Since $H$ has nine vertices, there exists at least one color appearing on at least three vertices of $H$, say color $c_1$.
Note that those vertices with the same color form an independent set.
It is easy to check 
that there are exactly three independent sets of $H$ with size three (which shown as vertices numbered by $1, 2$ and $3$  in Figure~\ref{fig:comp3}).
Since the vertices of every such vertex set cover all hyperedges of $H$,
the triad $\{c_2, c_3, c_4\}$ does not appear on any hyperedge of $H$.
Hence $H$ has no complete $4$-coloring and so it is a desired hypergraph.
}\end{proof}
%
%==============================================
%
\subsection{A $3$-regular $3$-uniform hypergraph of order $15$}

Another positive answer to Problem~\ref{prob:4} is given in the next theorem.
\begin{thm}
{There exists a $3$-uniform $3$-regular hypergraph of order $15$ with a complete $\chi(H)$-coloring and a complete $\psi(H)$-coloring, but no complete $t$-coloring for every $t$ satisfying $\chi(H)< t<\psi(H)$ in which  $\psi(H)\ge \chi(H)+2$.
}\end{thm}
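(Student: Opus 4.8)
The plan is to imitate the order-9 construction of Theorem~\ref{thm:9v} but spread it out so that the hypergraph becomes $3$-regular, at the cost of adding vertices. Concretely, I would look for a $3$-uniform hypergraph $H$ on $15$ vertices in which every vertex lies in exactly $3$ hyperedges; counting incidences, this forces $|E(H)|=15$. The target is again $\chi(H)=3$ and $\psi(H)=5$ with no complete $4$-coloring. As in the order-9 case, since a complete $k$-coloring needs at least $\binom{k}{3}$ hyperedges and $H$ has only $15$, we automatically get $\psi(H)\le 6$; refining this (either by a sharper counting argument on how many colors can actually be ``large'' or by direct inspection of the construction) should push the bound down to $\psi(H)\le 5$.

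The first concrete step is to exhibit the hypergraph, most cleanly via its incidence graph (a $3$-regular bipartite graph with the $15$ vertices on one side and the $15$ hyperedges on the other), together with an explicit complete $3$-coloring and an explicit complete $5$-coloring. The $3$-coloring certifies $\chi(H)\le 3$ (and $\chi(H)\ge 3$ since there is at least one hyperedge), while the $5$-coloring certifies $\psi(H)\ge 5$; combined with $\psi(H)\le 5$ this pins down $\chi(H)=3$ and $\psi(H)=5$. I would design the $15$ hyperedges so that the structure of small independent sets is very rigid — ideally so that, as in the order-9 proof, there are only a few independent sets of the maximum relevant size, and each of them covers every hyperedge.

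The heart of the argument is ruling out a complete $4$-coloring. Suppose for contradiction $H$ has one with colors $c_1,\dots,c_4$. Since $|V(H)|=15$, some color class, say the one for $c_1$, has at least $\lceil 15/4\rceil = 4$ vertices, and this class is an independent set. The goal is to force every independent set of size $\ge 4$ in $H$ to be a \emph{covering} set — i.e., to meet every hyperedge — so that the three colors $c_2,c_3,c_4$ never appear together on a hyperedge, contradicting completeness. So the design constraint I would impose on the $15$ hyperedges is exactly this: every independent set of size $4$ (equivalently every maximal independent set of size $\ge 4$) is a transversal of the hyperedge set. This is the analogue of ``the three size-$3$ independent sets cover all hyperedges'' from the order-$9$ proof, and verifying it is a finite check once the incidence graph is fixed.

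The main obstacle, and where most of the work lies, is the simultaneous satisfaction of the competing constraints: $3$-regularity (hence exactly $15$ hyperedges) sharply limits how many incidences are available, yet we still need (i) enough hyperedges arranged so that a complete $5$-coloring exists — this requires the ten triads of a $5$-set to be realizable among only $15$ hyperedges, so the $5$-coloring must be chosen hand-in-glove with the construction — and (ii) the rigidity property that all large independent sets are covering. I expect the cleanest route is to start from the order-$9$ example, "blow up" or subdivide its over-covered vertices to equalize degrees to $3$, and then re-verify the three properties (complete $3$-coloring, complete $5$-coloring, no complete $4$-coloring) on the resulting $15$-vertex hypergraph by a direct but finite case analysis on its (few) large independent sets.
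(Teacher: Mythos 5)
Your proposal is a plan rather than a proof---no hypergraph is actually exhibited---but more importantly the central design constraint you want to impose is arithmetically impossible. You propose to arrange the $15$ hyperedges so that every independent set of size $4$ is a transversal (meets every hyperedge), so that a color class of size $\ge\lceil 15/4\rceil=4$ would force the remaining three colors to miss every hyperedge. But in a $3$-regular hypergraph a set of $4$ vertices is incident to at most $4\times 3=12$ hyperedges, while $|E(H)|=15$; hence \emph{no} $4$-set can cover all hyperedges. Only a $5$-set can possibly be a transversal, and only if its members have pairwise disjoint hyperedge neighborhoods. This is exactly why the paper's proof cannot stop at color classes of size $4$: in the paper's construction (vertices $v_{i,j}$, $1\le i\le 3$, $1\le j\le 5$, with $e_{ij}=\{v_{i,j+1}\}\cup\{v_{t,j}:t\ne i\}$, indices cyclic), the transversal argument applies only when a color occupies an entire row $X_i$ of $5$ vertices. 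The case that actually requires work is the partition $15=4+4+4+3$, where each $c_i$ ($i\le 3$) sits on four vertices of $X_i$ and $c_4$ on one leftover vertex per row; the paper kills this with a separate structural observation (two equally colored vertices in different rows cannot lie in consecutive columns $Y_j\cup Y_{j+1}$, which is impossible for three vertices spread over five cyclic columns). Your sketch contains no substitute for this second, essential half of the argument.

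Two smaller points. First, your bound $\psi(H)\le 6$ followed by a promised ``refinement'' is backwards: $\binom{6}{3}=20>15$ already gives $\psi(H)\le 5$ with no further work, exactly as in the paper. Second, the suggestion to obtain the example by ``blowing up'' the order-$9$ hypergraph to equalize degrees is not obviously realizable and is not how the paper proceeds; the paper's example has a clean independent cyclic description, and any replacement construction you propose must still be written down explicitly and have its independent-set structure verified before the nonexistence of a complete $4$-coloring can be claimed.
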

\begin{proof}
{Let $H$ be the hypergraph with the vertex set
 $\{v_{i,j}:1\le i\le 3,  1\le j\le 5\}$ consisting of those hyperedges
 $e_{ij}$ with $1\le i\le 3$ and  $1\le j\le 5$
 in which 
 $$e_{ij}=\{v_{i,j+1}\} \cup \{v_{t,j}:1\le t\le 3, t\neq i\} ,$$ 
where  $v_{i,6}=v_{i, 1}$. 
The incidence graph of this hypergraph is shown in Figure~\ref{fig:15-comp3}.
Obviously, $H$ is $3$-uniform and $3$-regular.
If $H$ has a complete $k$-coloring for $k \ge 6$, then $H$ must have  at least twenty hyperedges.
However, $H$ has exactly fifteen hyperedges and hence $\psi(H) \le 5$.
In fact, $H$ has a complete $3$-coloring and a complete $5$-coloring (see Figures~\ref{fig:15-comp3} and~\ref{fig:15-comp5}, respectively).
Therefore, $\chi(H) = 3$ and $\psi(H)=5$.

\begin{figure}[h]
 \begin{minipage}{0.5\hsize}
  \centering
   \includegraphics[scale = 1]{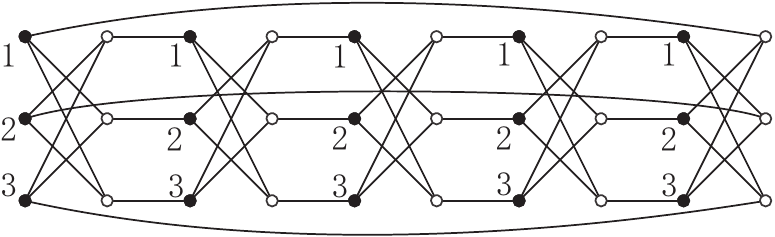}
  \caption{A complete $3$-coloring of $H$}
  \label{fig:15-comp3}
 \end{minipage}
 \begin{minipage}{0.5\hsize}
 \centering
  \includegraphics[scale = 1]{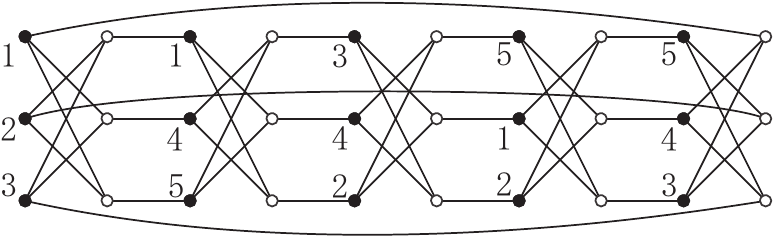}
  \caption{A complete $5$-coloring of $H$}
  \label{fig:15-comp5}
 \end{minipage}
\end{figure}

Suppose, to the contrary, that $H$ has a complete $4$-coloring using colors $c_1,\ldots, c_4$.
Since $|V(H)| = 15$, there exists a color appearing on at least four vertices, say $c_1$.
Define $X_i= \{v_{i,j}: 1\le j\le 5\}$ for each $i$ with $1\le i \le 3$.
According to the construction of $H$, 
it  is not difficult to  check that every independent set of size four must be a subset of $X_1$, $X_2$, or $X_3$.
Hence the color $c_1$ only appears on vertices of a set $X_t$ where $1\le t\le 3$.
If $c_1$ appears on five vertices, then  it must appear on all vertices of $X_t$.
In this case, the triad $\{c_2, c_3, c_4\}$ does not appear,
because  all hyperedges of $H$ are covered by the vertices of $X_t$.
Therefore, each color appears on at most four vertices.
Since $H$ has $15$ vertices,  every color must appear on four vertices, except one color which appears on three vertices.
We may assume that  for each $i\in \{1,2,3\}$, the 
color $c_i$  appears on exactly four  vertices of $X_i$.
Then the remaining three vertices are colored by  $c_4$ so that each $X_i$ includes exactly one of them.
Let us define $Y_j = \{v_{i,j}: 1\le i\le 3\}$ for each $j$ with $1\le j \le 5$.
It is easy to check that
if a vertex in $X_i$ and a vertex in $X_{i'}$ are colored by the same color provided that $i\neq i'$,
both of them cannot be in the set $Y_j\cup Y_{j+1}$ for all $j\in \{1,\ldots, 5\}$; where $Y_{6}=Y_1$. 
Now, since three vertices are colored by $c_4$ and each $X_i$ includes exactly one of them, we derive a contradiction.
Therefore, $H$ has no complete $4$-coloring and it is a desired hypergraph.
}\end{proof}
%
%==============================================================================
%

\subsection{Answering to a stronger version of Problem~\ref{prob:4}}
Our aim in this subsection is to present a $3$-uniform $3$-colorable hypergraph  having a complete $6$-coloring
but  no complete $t$-coloring for each $t\in \{4,5\}$. To find such a hypergraph, 
we first made a complete $3$-uniform hypergraph $H$ of order $6$ with size $\binom{6}{3}$ 
so that for any triad of vertices, there is a hyperedge including all of them. 
Next, we tried to generate new hypergraphs by splitting every vertex into two vertices and examine the other necessary properties using a special computer search. By this way, we succeeded to prove the following assertion. 
This method was already used to  make the hypergraph stated in the proof of Theorem~\ref{thm:9v}. 
\begin{thm}\label{thm:stronger-PB}
{There exists a  $3$-uniform  hypergraph $H$ with a complete $\chi(H)$-coloring and a complete $\psi(H)$-coloring, 
but no  complete $t$-coloring for every $t$ satisfying $\chi(H)< t<\psi(H)$ in which  $\psi(H)\ge 2\chi(H)$.
}\end{thm}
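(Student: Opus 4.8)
The plan is to produce an explicit $3$-uniform hypergraph $H$ with $\chi(H)=3$ and $\psi(H)=6$ that admits no complete $4$-coloring and no complete $5$-coloring, which immediately gives $\psi(H)=6=2\chi(H)$ and settles the claim. Following the construction philosophy described just before the statement, I would start from the complete $3$-uniform hypergraph $K_6^{(3)}$ on vertices $\{1,\ldots,6\}$ (all $\binom{6}{3}=20$ triples), which has a complete $6$-coloring trivially (each vertex its own color). I would then \emph{split} each vertex $i$ into a pair $\{a_i,b_i\}$, obtaining a $12$-vertex hypergraph, and choose for each original hyperedge $\{i,j,k\}$ exactly one of the up to eight "lifted" triples $\{x_i,x_j,x_k\}$ with $x_i\in\{a_i,b_i\}$, etc. The partition into six pairs $P_i=\{a_i,b_i\}$ gives a proper $3$-colorability witness (color $P_i$ by color $i\bmod 3$, say, after grouping the six pairs into three classes of two) so $\chi(H)\le 3$, and since every lifted triple still meets three distinct pairs, assigning color $i$ to all of $P_i$ yields a complete $6$-coloring, so $\psi(H)\ge 6$; a counting bound ($\binom{k}{3}\le 20$ hyperedges forces $k\le 6$) gives $\psi(H)=6$.

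The heart of the argument, and the step I expect to be the main obstacle, is arranging the choice of lifts so that simultaneously (i) no complete $4$-coloring exists and (ii) no complete $5$-coloring exists, while (iii) the complete $6$-coloring and $3$-colorability survive. For the non-$4$-colorability and non-$5$-colorability I would argue as in the proofs of Theorems~\ref{thm:9v} and the $15$-vertex theorem: in any complete $t$-coloring with $t<6$, some color class is large (at least $\lceil 12/t\rceil$ vertices, so $\ge 3$ when $t=5$ and $\ge 3$ when $t=4$), hence contained in a small family of maximal independent sets; one then shows that the vertices of such an independent set cover \emph{every} hyperedge of $H$, so the remaining $t-1\ge 3$ colors, or rather some triad among them, never appears on a hyperedge, contradicting completeness. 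Making this work requires that the independent sets of size $\ge 3$ in $H$ be exactly the "trivial" ones (unions of parts, or subsets thereof) and that each such set be a transversal/cover of $E(H)$; this is precisely what the computer search referenced in the paragraph preceding the theorem is verifying, and I would present the resulting hypergraph by an explicit list of its $20$ hyperedges (or an incidence-graph figure) and then check these independence-and-covering properties by a short finite case analysis.

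Concretely, I would structure the written proof as: (1) define $H$ on $V=\{a_i,b_i:1\le i\le 6\}$ by listing the $20$ chosen triples; (2) exhibit the complete $6$-coloring (color $v\in P_i$ with $c_i$) and verify via $(a_2)$-type observations that it is proper and that every $6$-subset — here just the full set — appears, giving $\psi(H)\ge 6$; (3) note $H$ has only $20$ hyperedges so $\psi(H)\le 6$, hence $\psi(H)=6$; (4) exhibit a proper $3$-coloring, giving $\chi(H)=3$ (it is $\ge 3$ since hyperedges have size $3$); (5) rule out $t=5$: a color class of size $\ge 3$ lies in one of the listed independent sets, each of which covers all $20$ hyperedges, so one of the remaining $\binom{4}{3}$ triads is missing; (6) rule out $t=4$ similarly with a color class of size $\ge 3$. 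Steps (5) and (6) are where all the combinatorial work lives, and the soundness of the whole approach rests on having chosen the $20$ lifts so that the only independent sets of size $\ge 3$ are "vertical" ones contained in $P_i\cup P_j$ for suitable $i,j$ and each is a covering transversal — a property I would state as a lemma about $H$ and verify by the finite search already alluded to, rather than re-derive by hand.
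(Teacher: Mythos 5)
Your overall strategy coincides with the paper's: an explicit $12$-vertex, $20$-hyperedge hypergraph obtained by splitting the vertices of $K_6^{(3)}$, with $\chi(H)=3$ and $\psi(H)=6$ certified by explicit colorings, $\psi(H)\le 6$ from the counting bound $\binom{7}{3}=35>20$, and the exclusion of $t\in\{4,5\}$ via an analysis of large color classes and independent sets. But as written the proposal has a genuine gap: the hypergraph is never actually produced. Everything that makes the theorem true --- which of the (up to) eight lifts of each of the $20$ triples is chosen, the verification that the only independent sets of size at least three are contained in three size-four sets $X_1,X_2,X_3$, and the fact that each $X_i$ covers every hyperedge --- is deferred to ``the finite search already alluded to.'' An existence theorem proved by exhibiting an example is only proved once the example is exhibited; your steps (5) and (6) are conditional on a structural lemma you do not establish, and the $3$-colorability of the lifted hypergraph (which fails for a generic choice of lifts, since in $K_6^{(3)}$ every triple of original vertices is a hyperedge) also depends on the unspecified choice. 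The paper supplies the object explicitly (as an incidence graph) and then runs essentially the argument you outline.

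There is also a smaller logical gap in step (6). For $t=4$ your covering argument only works when some color class has size at least four, so that it fills an entire $X_i$ and hence covers all hyperedges, killing the triad of the remaining three colors. If every color class has size exactly three, a class is only a proper subset of some $X_i$ and need not cover $E(H)$; excluding both the class's color and the color of the fourth vertex of $X_i$ leaves only two colors, too few to name a missing triad. The paper handles this case by a different argument: a complete $4$-coloring with all classes of size three would require four pairwise disjoint independent triples, which is impossible because all independent triples lie inside only the three sets $X_1,X_2,X_3$. Your ``rule out $t=4$ similarly'' elides precisely this case.
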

\begin{proof}
{Let $H$ be the $3$-uniform hypergraph whose incidence graph is shown in Figure~\ref{fig:12-46-comp3}.
If $H$ has a complete $k$-coloring for $k \ge 7$,
then it has at least thirty-five hyperedges.
However, $H$ has exactly twenty hyperedges and hence $\psi(H) \le 6$.
In fact, $H$ has a complete $3$-coloring and a complete $6$-coloring (see Figures~\ref{fig:12-46-comp3} and~\ref{fig:12-46-comp6}, respectively). Therefore, $\chi(H) = 3$ and $\psi(H)=6$.

\begin{figure}[h]
 \begin{minipage}{0.5\hsize}
  \centering
   \includegraphics[scale = 0.5]{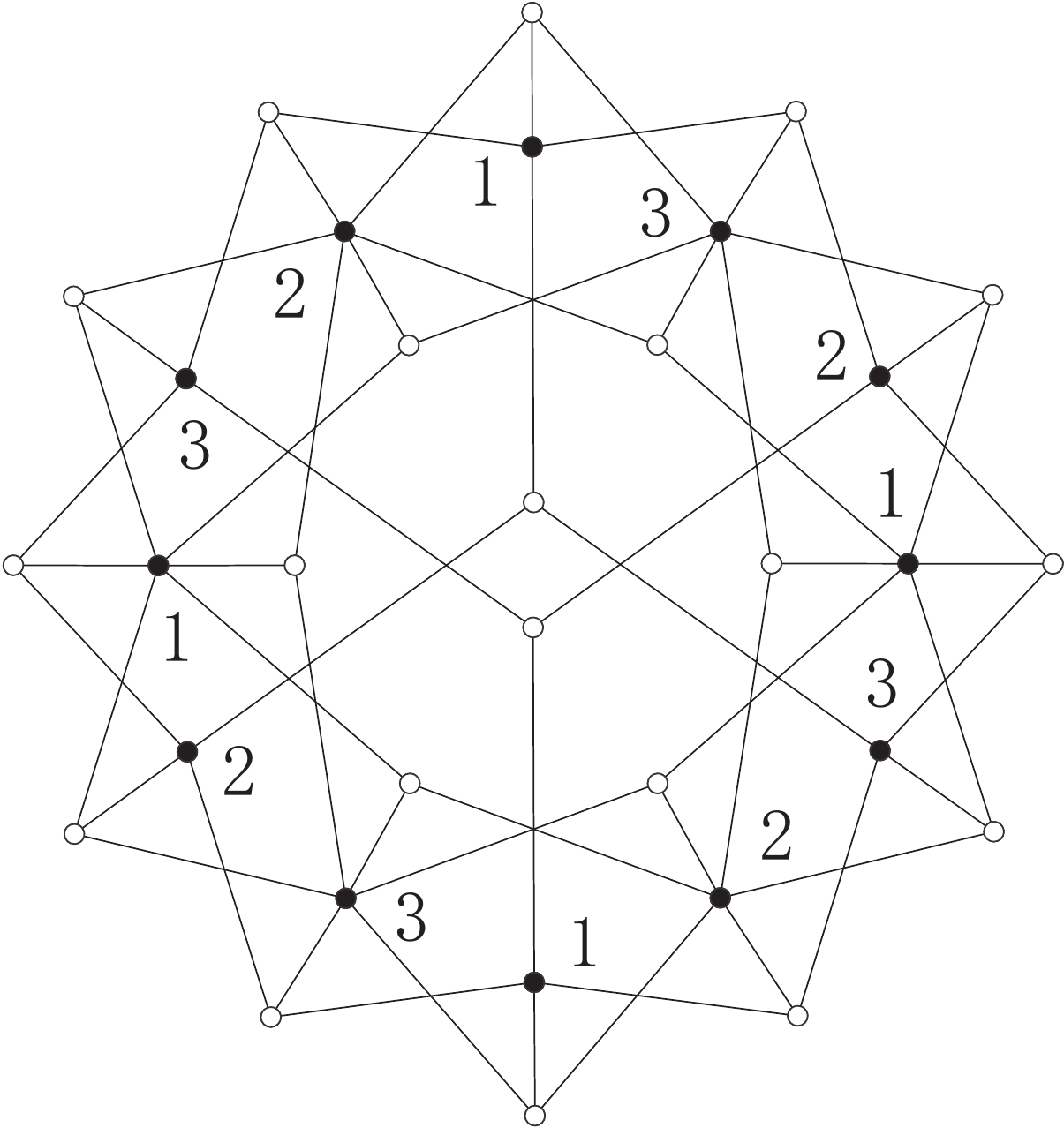}
  \caption{A complete $3$-coloring of $H$.}
  \label{fig:12-46-comp3}
 \end{minipage}
 \begin{minipage}{0.5\hsize}
 \centering
  \includegraphics[scale = 0.5]{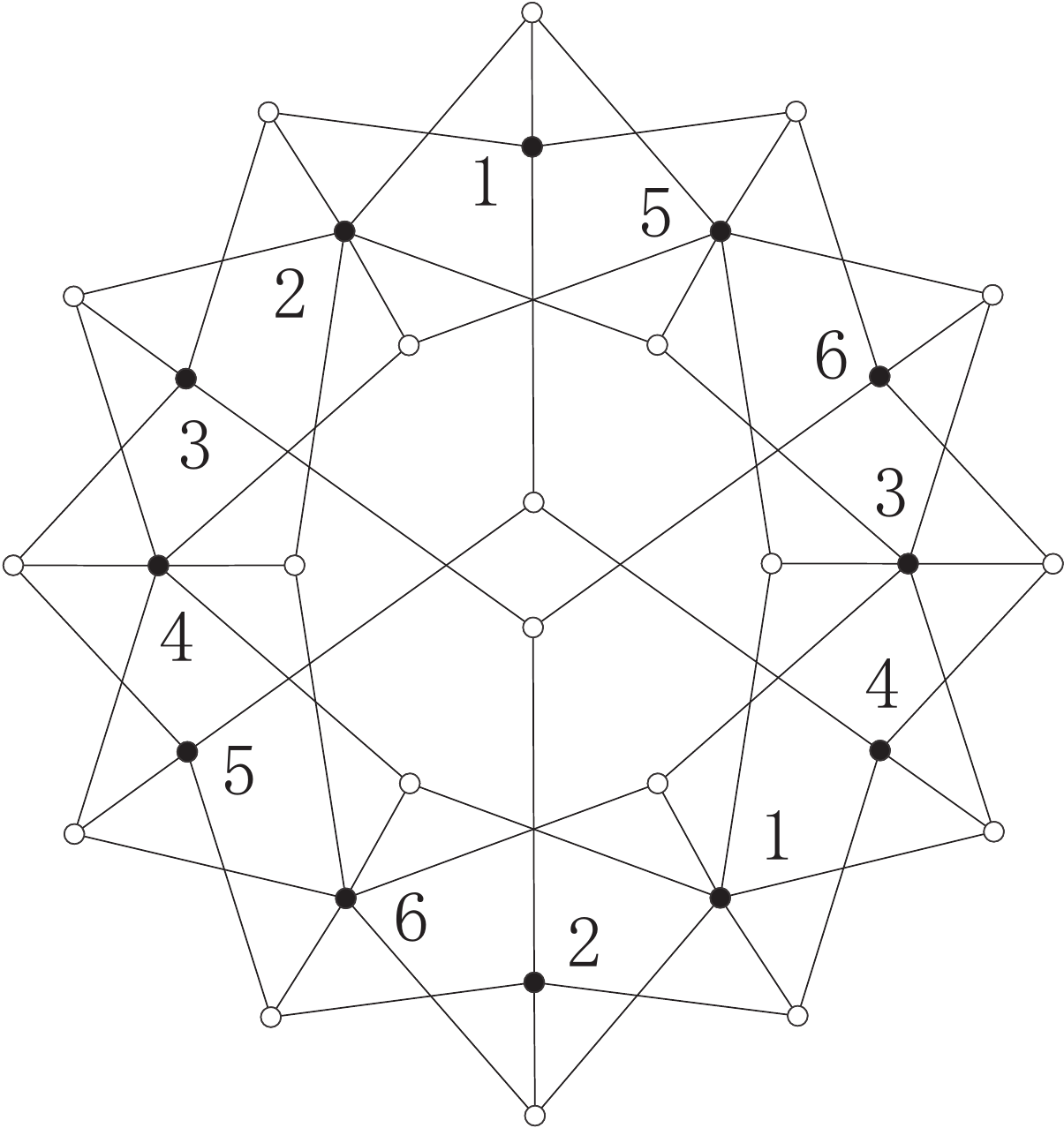}
  \caption{A complete $6$-coloring of $H$.}
  \label{fig:12-46-comp6}
 \end{minipage}
\end{figure}

Next, we show that $H$ has neither a complete $4$-coloring nor a complete $5$-coloring.
According to the construction of $H$, it is not hard to check that there are exactly three independent sets  $X_1, X_2$ and $X_3$ of size four (which shown as vertices numbered by $1, 2$ and $3$ in Figure~\ref{fig:12-46-comp3}, respectively).
Moreover, every independent set  of size three must be a subset of $X_1$, $X_2$, or $X_3$.
Suppose, to the contrary, that $H$ has a complete $4$-coloring using colors $c_1,\ldots, c_4$.
 First, we assume that there exists a color appearing on at least four vertices of $H$, say color $c_1$.
Since $H$ has no independent sets of size  five, the color $c_1$ must appear on all four vertices of a set $X_i$, where $i\in \{1,2,3\}$. Since these four vertices cover all hyperedges of $H$,
the triad $\{c_2, c_3, c_4\}$ does not appear on any hyperedge of $H$, a contradiction.
Now, since $H$ has $12$ vertices, we may assume that every color appears on exactly three vertices of $H$.
On the other hand, $H$  has at most three disjoint  independent sets of size three, a contradiction.
Therefore, $H$ has no complete $4$-coloring.

Suppose, to the contrary, that $H$ has a complete $5$-coloring using colors $c_1,\ldots, c_5$.
As we have observed above, no color can appear on at least four vertices.
Since $H$ has $12$ vertices, there must be a color appearing on exactly three vertices of $H$, say $c_1$.
Call the set of all vertices having the color $c_1$ by $S$.
Since the size of $S$ is three, it must be a subset of $X_1$, $X_2$, or $X_3$, say $X_1$.
 We may assume that  the unique vertex in $X_1\setminus S$ is colored by $c_2$.
Since $X_1$ covers all hyperedges of $H$, the triad $\{c_3,c_4,c_5\}$ does not appear  on any hyperedge of $H$, a contradiction.
Therefore, $H$ has no complete $5$-coloring and it is a desired hypergraph.
}\end{proof}
%
%==============================================================================
%
%
\section{An exceptional example for  Conjecture~\ref{conj}}
A counterexample of Conjecture~\ref{conj} is given in the following theorem which 
 answers Problem~\ref{prob:3} as well.
This hypergraph was first found
 by writing a C\texttt{++} code for checking complete coloring of hypergraphs and 
by applying it on the specified outputs of {\it plantri} program due to Brinkmann and McKay~\cite{Brinkmann-McKay}.
Note that this face hypergraph is unique by searching among all $3$-colorable planar triangulations on up to $23$ vertices. 
\begin{thm}\label{thm:triangulation}
{There is a $3$-uniform  $3$-colorable face hypergraph of order $12$, obtained from a planar triangulation, having a complete $6$-coloring but with no complete $5$-coloring.
}\end{thm}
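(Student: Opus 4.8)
The plan is to exhibit an explicit planar triangulation $T$ on $12$ vertices, all of whose vertices have even degree (so that by the cited result of Tsai and West the associated face hypergraph $H$ is $3$-colorable), and then verify the three required facts about $H$: that $\chi(H)=3$, that $H$ admits a complete $6$-coloring, and that $H$ admits no complete $5$-coloring. Since $T$ is a triangulation on $12$ vertices, it has $2\cdot 12-4=20$ triangular faces, so $|E(H)|=20$; this is the arithmetic that makes $\psi(H)$ small enough to pin down. First I would describe $T$ concretely (it is the unique such example found by the computer search over all $3$-colorable planar triangulations on at most $23$ vertices, so a picture of its incidence graph, as in the figures, together with a face list suffices), and record its face hypergraph $H$ as a list of $20$ triples from a $12$-element vertex set.

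Next I would establish $\chi(H)=3$ and the existence of a complete $6$-coloring. For $\chi(H)=3$: each hyperedge has size $3$, so $\chi(H)\ge 3$, and the even-degree condition gives a proper $3$-coloring of $T$ (equivalently of $H$) by Tsai--West, hence $\chi(H)=3$; I would also check this $3$-coloring is in fact complete, i.e.\ that all three color-triples (there is only one, $\{c_1,c_2,c_3\}$) appear, which is automatic since every face is rainbow. For the complete $6$-coloring I would simply exhibit an assignment of the colors $\{c_1,\dots,c_6\}$ to the $12$ vertices and check, face by face, first that it is proper and then that each of the $\binom{6}{3}=20$ color-triples occurs on some face — the count matches $|E(H)|=20$ exactly, so the coloring must biject faces with color-triples, which is both the thing to verify and a useful constraint when searching for the coloring. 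This is routine but tedious bookkeeping; I would present it as an explicit table.

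The substantive step is showing $H$ has no complete $5$-coloring; note this, combined with the above, already disproves Conjecture~\ref{conj}, since $\chi(H)=3<5<6=\psi(H)$ (the bound $\psi(H)\le 6$ follows because a complete $k$-coloring needs at least $\binom{k}{3}$ hyperedges and $\binom{7}{3}=35>20=|E(H)|$, while a complete $6$-coloring exists). Suppose for contradiction that $H$ has a complete $5$-coloring with colors $c_1,\dots,c_5$. A complete $5$-coloring needs all $\binom{5}{3}=10$ color-triples to appear, so at least $10$ of the $20$ faces are used; I would argue via the vertex-distribution: with $12$ vertices and $5$ colors, some color class has size $\ge 3$. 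The key structural input is that the color classes are independent sets of $H$, so I would first determine all the independent sets of $H$ of sizes $3,4,5,\dots$ — presumably showing (as in the order-$9$ and order-$12$ cases of Theorems~\ref{thm:9v} and~\ref{thm:stronger-PB}) that there are only a few ``large'' independent sets, each of which \emph{covers} all $20$ faces, i.e.\ meets every hyperedge. If a color class $S$ covers every hyperedge, then no hyperedge carries a triple avoiding that color, so the $\binom{4}{3}=4$ triples among the other four colors never appear, contradicting completeness. The proof then reduces to: (i) catalog the independent sets of $H$ of size $\ge 3$; (ii) check each is a covering set; (iii) conclude that in any $5$-coloring with a class of size $\ge 3$ some four-color-subset's triples are missed, and rule out the remaining case where all classes have size $\le 2$ (impossible since $5\cdot 2=10<12$).

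The main obstacle will be step (i)–(ii): identifying all independent sets of size three (and larger) in this particular $12$-vertex face hypergraph and verifying each of them hits every one of the $20$ faces. This is a finite but delicate case analysis tied to the specific combinatorics of $T$; I would organize it around the three $3$-colorability classes of $T$ (the natural candidate large independent sets), show these are essentially the only independent sets of size $\ge 3$ up to the structure, and exploit that in a triangulation each color class of a proper $3$-coloring is a dominating-type set meeting every face. Once that catalog is in hand, the rest is the short counting contradiction above.
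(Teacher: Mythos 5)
Your treatment of everything except the last step matches the paper: $|E(H)|=2\cdot 12-4=20$, the bound $\psi(H)\le 6$ from $\binom{7}{3}=35>20$, the existence of a complete $3$-coloring (every face of a proper $3$-coloring of a triangulation is automatically rainbow) and an explicitly exhibited complete $6$-coloring, which indeed must put the $20$ faces in bijection with the $20$ triples. The gap is in the part that carries the theorem, the nonexistence of a complete $5$-coloring, where your independent-set strategy does not go through for this hypergraph.

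Step (ii) of your plan --- that every independent set of size at least $3$ meets all $20$ hyperedges --- is false here. In the face hypergraph of a triangulation, two vertices lie in a common hyperedge iff they are adjacent in $T$, so an independent set $S$ of $H$ is an independent set of $T$ and meets exactly $\sum_{v\in S}\deg_T(v)$ faces. Each color class $X$ of the proper $3$-coloring meets every face exactly once, so $\sum_{v\in X}\deg_T(v)=20$; some class has at least $4$ vertices, and any $3$-element subset of it is an independent set missing at least $\deg_T(u)\ge 4$ faces, where $u$ is an omitted vertex. Since your counting only excludes the case where all five classes have size at most $2$, distributions such as $(3,3,3,2,1)$ survive, and for a size-$3$ class the covering contradiction simply is not available. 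The refined version used in the paper's Theorem~\ref{thm:stronger-PB} --- every size-$3$ independent set lies inside one of three covering size-$4$ sets, so exactly one triple (on the colors avoiding both the class and the omitted vertex) is missed --- also does not transfer to this triangulation, which has size-$3$ independent sets of $T$ not confined to the three color classes; this is precisely why the authors do not reuse that argument. Their actual proof is of a different nature: after normalizing the colors on the six inner vertices $w_1,\dots,w_6$, they prove that in any proper coloring of the octahedron each pair of colors occurs in at most two of the eight face-triads, deduce that $c_4$ and $c_5$ are thereby forced onto specific outer vertices, and finish with a three-case analysis on the color of $v_4$, in each case locating a triple that cannot occur. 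So what you flag as the ``main obstacle'' (cataloguing and checking the covering property) is not a tedious verification but a claim that fails, and the $5$-coloring argument needs to be replaced by something along the paper's lines or by an exhaustive check of the specific hypergraph.
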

\begin{proof}
{Let $H$ be the $3$-uniform face hypergraph obtained from the planar triangulation shown in Figure~\ref{fig:12-facehyp-comp3}. 
If $H$ has a complete $k$-coloring for $k \ge 7$, then $H$ has at least thirty-five hyperedges.
However, $H$ has exactly twenty hyperedges and hence $\psi(H) \le 6$.
In fact, $H$ has a complete $3$-coloring and 
a complete $6$-coloring (see Figures~\ref{fig:12-facehyp-comp3} and~\ref{fig:12-facehyp-comp6}, respectively).
 Therefore, $\chi(H) = 3$ and $\psi(H)=6$.

\begin{figure}[h]
 \begin{minipage}{0.5\hsize}
  \centering
   \includegraphics[scale = 0.5]{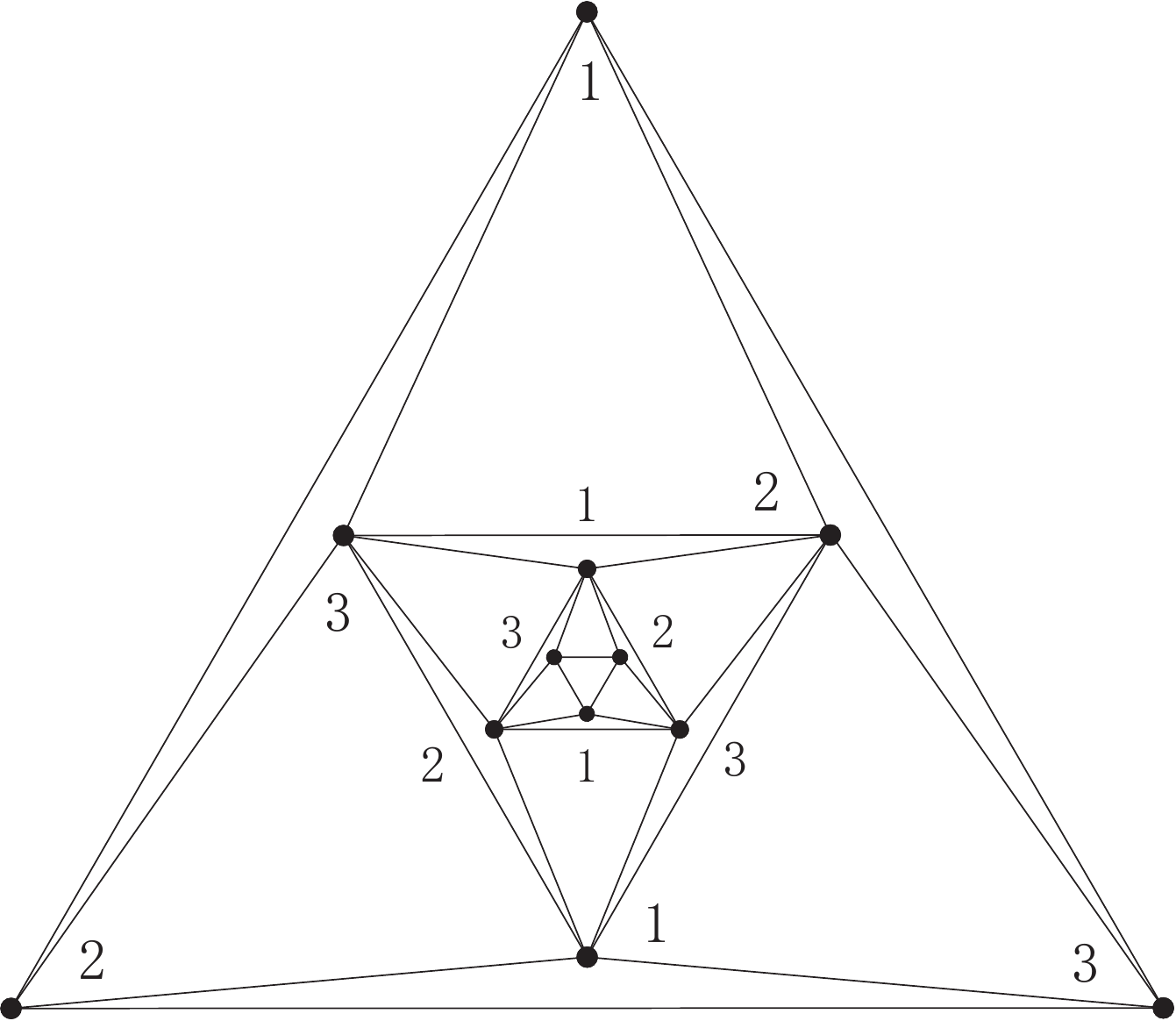}
  \caption{A complete $3$-coloring of $H$}
  \label{fig:12-facehyp-comp3}
 \end{minipage}
 \begin{minipage}{0.5\hsize}
 \centering
  \includegraphics[scale = 0.5]{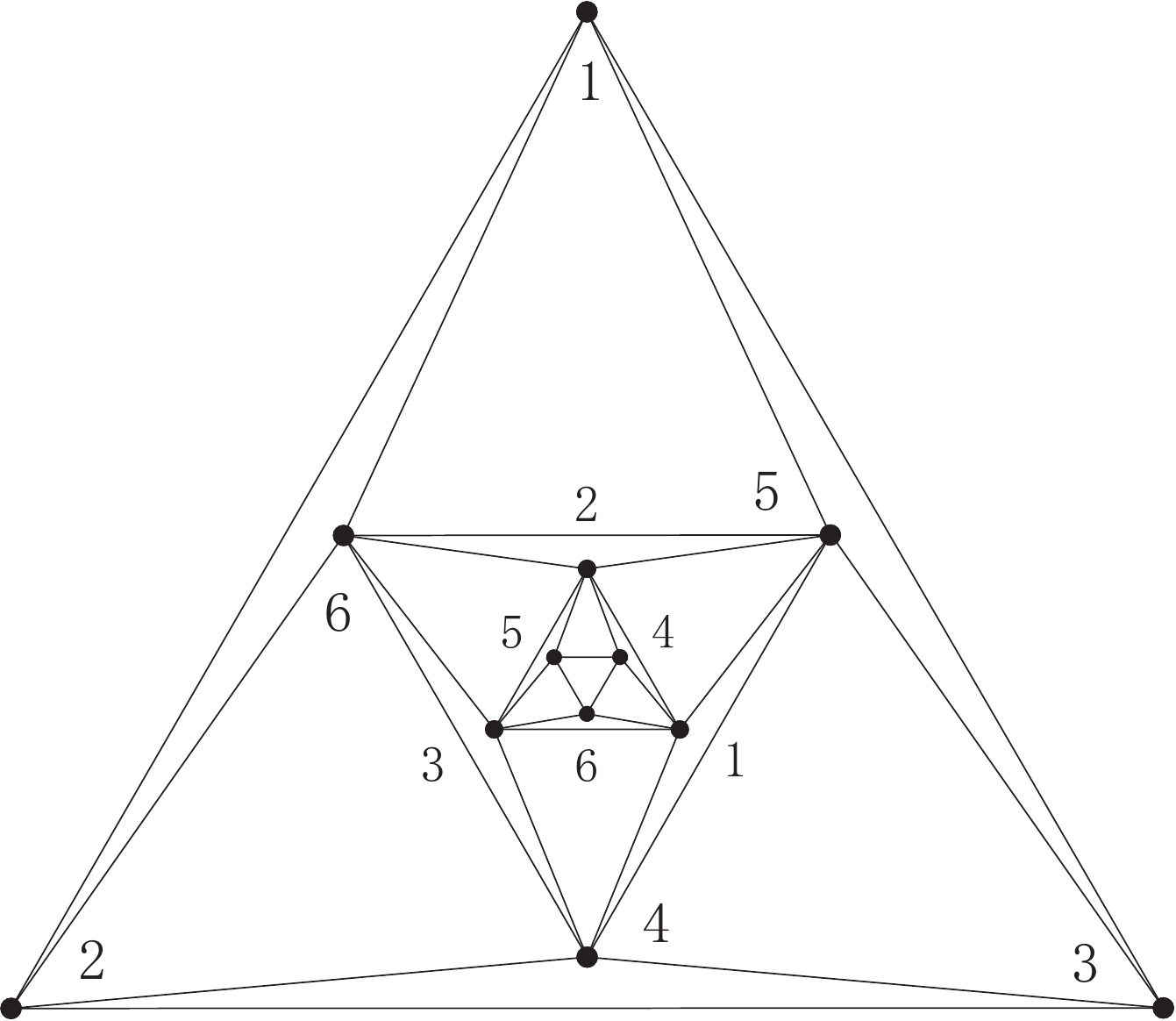}
  \caption{A complete $6$-coloring of $H$}
  \label{fig:12-facehyp-comp6}
 \end{minipage}
\end{figure}

Suppose, to the contrary, that $H$ has a complete $5$-coloring using colors $c_1,\ldots, c_5$.
For every $i$ with $1\le i\le 6$, we call those two vertices of $H$ specifying by the number $i$ in Figure~\ref{fig:12-facehyp-comp6} 
by $v_i$ and $w_i$ such that  $w_i$ is the inner one.
We  may assume that $w_1$, $w_2$, and $w_3$ are colored by $c_1$, $c_2$, and $c_3$, respectively.
We may also assume that each of the colors  $c_4$ and $c_5$ appears on at least one of $w_4$, $w_5$, and $w_6$; 
 otherwise, it is enough to change the colors of them to make this property along with maintaining the property of complete $5$-coloring.
According to the features of the hypergraph $H$, we can also assume that $w_4$, $w_5$, and $w_6$ 
are colored by $c_4$, $c_5$, and $c_2$, respectively.
It is not difficult to check that for a given arbitrary proper coloring of the octahedron, 
every pair of colors is contained in at most two kinds of triads appeared on faces of the octahedron.
Thus the octahedron $v_1v_2\cdots v_6$ has at most two kinds of colored faces including  both of $c_3$ and $c_4$.
Since there exist three remaining triads containing $c_3$ and $c_4$, one can 
conclude that the color $c_4$ must appear on  either $v_4$  or $v_6$.
Similarly, with respect to the colors $c_1$ and $c_5$ on this octahedron, one can also conclude that 
the color $c_5$ must appear on either $v_4$  or $v_5$.
To complete the proof, we shall consider three cases.
\vspace{2mm}
\\%----------- Case A
{\bf Case A:} The vertex $v_4$ is colored by $c_2$.
\\
In this case, the vertices  $v_5$ and $v_6$ must be colored by $c_5$ and $c_4$, respectively.
Since  at least one face is colored by the triad $\{c_1,c_4,c_5\}$, the color $c_1$ must also appear on the vertex $v_1$.
Consequently, it is easy to see that  the triad $\{c_3, c_4, c_5\}$ cannot appear,  which is a contradiction.
\vspace{2mm}
\\%----------- Case B
{\bf Case B:} The vertex $v_4$ is colored by $c_4$.
\\
In this case, the vertex $v_5$ must be colored by $c_5$ and so the vertex $v_6$ must be colored by $c_1$.
Since at least one face is colored by the triad $\{c_3,c_4, c_5\}$, the color $c_3$ must also appear on the vertex $v_3$.
Consequently, it is easy to see that  the triad $\{c_1,c_3,c_5\}$ cannot appear  which is again a contradiction.
\vspace{2mm}
\\%----------- Case C
{\bf Case C:} The vertex $v_4$ is colored by $c_5$.
\\
The proof of this case is similar to  Case B (by exchanging the colors $c_4$ and $c_5$ and using the symmetry of $H$).

Hence the  proof is completed.
}\end{proof}
%
%
%==============================================================================
%
%
%
%
%
%
%
%==============================================================================
%

%\bibliographystyle{siam}
%\bibliography{ref}
\end{document}